\documentclass[12pt]{amsart}
\usepackage{amsmath,amssymb,amsthm}
\usepackage{geometry}
\usepackage{hyperref}
\usepackage{bookmark}
\usepackage[numbers,sort&compress]{natbib}

\newtheorem{theorem}{Theorem}[section]
\newtheorem{lemma}[theorem]{Lemma}
\newtheorem{proposition}[theorem]{Proposition}

\newtheorem{remark}[theorem]{Remark}

\newcommand{\R}{\mathbb{R}}

\renewcommand{\epsilon}{\varepsilon}

\begin{document}

\title{Moderate deviations for the Maki--Thompson rumour model}

\author{Shaochen WANG}
\address{School of Mathematics, South China University of Technology, Guangzhou, China}
\email{mascwang@scut.edu.cn}

\author{Guangyu YANG}
\address{School of Mathematics and Statistics, Zhengzhou University, Zhengzhou, China}
\email{guangyu@zzu.edu.cn}

\date{\today}

\begin{abstract}
The final proportion of ignorants in the classical Maki--Thompson rumour model is known to satisfy the law of large numbers, the central limit theorem, and the large deviation principle. In this note, we establish the corresponding moderate deviation principle, thereby bridging the Gaussian fluctuation regime and the large deviation regime. The proof rests on the exact final-size distribution, sharp asymptotics for the associated automata numbers, and a uniform point probability expansion at the moderate deviation scale.

%For the classical Maki-Thompson rumour model, the final proportion of ignorants is known to satisfy a law of large numbers, a central limit theorem, and a %large deviation principle.  We establish the corresponding moderate deviation principle for the final proportion of ignorants.  The proof is based on the %exact distribution of the final size, precise asymptotics for the associated automata numbers, and a uniform point-probability expansion at the %moderate-deviation scale. This result fills the gap between the Gaussian fluctuations described by the central limit theorem and the large deviation %behaviour.

\end{abstract}

\subjclass{60F10, 60J28, 60K35}
\keywords{Maki--Thompson model, moderate deviations, rumour propagation}

\maketitle

\section{Introduction and main result}

\subsection{The model}

The Maki--Thompson model \cite{MakiThompson} is a fundamental stochastic model for rumour propagation in a closed homogeneous population. Consider a population of $n+1$ individuals. At time $t=0$, one individual knows the rumour (the \emph{spreader}) and the remaining $n$ individuals are ignorant of it. As time evolves, individuals can be in one of three states:

\begin{itemize}
    \item \emph{Ignorants}: unaware of the rumour;
    \item \emph{Spreaders}: know the rumour and are willing to spread it;
    \item \emph{Stiflers}: know the rumour but have ceased spreading it.
\end{itemize}

The dynamics are Markovian with the following two types of transitions: (1) \emph{Spreader--Ignorant meeting}: When a spreader meets an ignorant, the ignorant becomes a spreader. This occurs at rate proportional to the product of the number of spreaders and ignorants. (2) \emph{Spreader--Spreader or Spreader--Stifler meeting}: When a spreader meets another spreader or a stifler, the initiating spreader becomes a stifler. This occurs at rate proportional to the product of the number of spreaders and the total number of non-ignorants.

More mathematically, let $X(t)$, $Y(t)$, $Z(t)$ denote respectively the numbers of ignorants, spreaders, and stiflers at time $t$. Initially, $X(0)=n$, $Y(0)=1$, $Z(0)=0$, and the total population size is conserved: $X(t)+Y(t)+Z(t)=n+1$ for all $t\ge 0$. The process stops at the random time $\tau = \inf\{t\ge 0: Y(t)=0\}$ when there are no spreaders left. We are interested in the final number of ignorants $X_n := X(\tau)$, i.e., the individuals who never hear the rumour. $(X(t),Y(t))$ is a continuous-time Markov chain on the state space $\{(i,j): 0\le i\le n,\ 1\le j\le n+1-i\}$ with transition rates:
\begin{align}
    (i,j) &\to (i-1,j+1) \quad \text{at rate } ij, \label{11} \\
    (i,j) &\to (i,j-1) \quad \text{at rate } j(n+1-i). \label{12}
\end{align}
Transition (\ref{11}) corresponds to a spreader telling an ignorant; transition (\ref{12}) corresponds to a spreader becoming a stifler upon meeting another spreader or a stifler.

\subsection{Known asymptotic results}

We now recall the main asymptotic results for the Maki--Thompson model, which serve as the foundation for our moderate deviation analysis. The asymptotic behaviour of $X_n$ as $n\to\infty$ has been studied extensively. Sudbury \cite{Sudbury85} proved the law of large numbers:
\begin{equation}\label{LLN}
    \frac{X_n}{n} \xrightarrow{p} x_\infty \quad \text{as } n\to\infty,
\end{equation}
where $x_\infty$ is given by
\begin{equation}\label{eq-xinf}
    x_\infty = -\frac{W_0(-2e^{-2})}{2} \approx 0.203187869979\ldots,
\end{equation}
with $W_0$ the principal branch of the Lambert $W$ function. Equivalently, $x_\infty$ satisfies $x_\infty e^{2(1-x_\infty)} = 1$.

Watson \cite{Watson88} established the central limit theorem (CLT):
\begin{equation}\label{CLT}
    \sqrt{n}\left(\frac{X_n}{n} - x_\infty\right) \xrightarrow{d} N(0,\sigma^2),
\end{equation}
where the asymptotic variance is
\begin{equation}\label{eq-sigma2}
    \sigma^2 = \frac{x_\infty(1-x_\infty)}{1-2x_\infty} \approx 0.272727\ldots.
\end{equation}

Lebensztayn \cite{Lebensztayn15} proved a large deviation principle (LDP) for $X_n/n$ with speed $n$
 and rate function $H(x), x\in\R$, where
 \begin{equation}\label{LDP}
    H(x)=
    \begin{cases}
        h(x), & 0\le x<1,\\
        +\infty, &{\rm otherwise}.
    \end{cases}
\end{equation}
Here $h(x)=x\log x+(1-x)[\varrho-\log(1-x)]$, $0\le x<1$, with the constant $\varrho=2+\log[x_\infty(1-x_\infty)]\approx0.1792$. Observe that the endpoint value is $H(1)=+\infty$, although the left limit of $h(x)$ as $x\uparrow1$ is zero.  The latter fact is important in the proof below: the LDP by itself does not give an exponentially small upper bound for shrinking or fixed neighbourhoods approaching the endpoint $1$.  We shall therefore use the exact distribution to control this endpoint contribution.  Note also that $h(x_\infty)=0$, $h'(x_\infty)=0$, and $h''(x_\infty)=1/\sigma^2$. For general background on large deviations we refer to \cite{DemboZeitouni}; for rumour models see \cite{APRT,DaleyGani,DaleyKendall65,DPZMV,FHA2025,FJMRM2022}, \cite{LebensztaynMachadoRodriguez11,LebensztaynMachadoRodriguezRandom11,LeRo2025}.

In this paper, we provide a complete moderate deviation analysis for the classical Maki--Thompson model, filling the gap between the CLT and the LDP.

\subsection{Moderate deviations}

It's known that the law of large numbers describes typical behaviour, the central limit theorem describes small Gaussian fluctuations on the scale $1/\sqrt{n}$, and the large deviation principle describes large deviations on the scale $1$. The moderate deviation principle (MDP) bridges these regimes by studying fluctuations on scales that are larger than those of the central limit theorem but smaller than those of the large deviation principle. For general theory of moderate deviations we refer to \cite{DemboZeitouni} and the references therein.

Let $\{b_n\}_{n\ge1}$ be a positive sequence.  The local moderate deviation estimates (see Proposition \ref{prop:local} below) require only
\begin{equation}\label{bn-weak}
    b_n\to\infty,\qquad \frac{b_n}{\sqrt n}\to0.
\end{equation}
For the full MDP on $\mathbb R$, however, the far-right endpoint must also be negligible on the $b_n^2$ scale.  We therefore further assume
\begin{equation}\label{bn-endpoint}
    \frac{b_n^2}{\log n}\to0.
\end{equation}
When both \eqref{bn-weak} and \eqref{bn-endpoint} hold, we shall write simply that $b_n$ satisfies
\begin{equation}\label{bn-condition}
    b_n\to\infty,\qquad \frac{b_n^2}{\log n}\to0.
\end{equation}

The goal of this note is to study the MDP for the sequence
\begin{equation}\label{Zn-def}
    Z_n := \frac{\sqrt{n}}{b_n}\left(\frac{X_n}{n} - x_\infty\right).
\end{equation}
Our main result is the following theorem.
\begin{theorem}[Moderate Deviation Principle]\label{thm:main}
    Assume \eqref{bn-condition}, then the sequence $\{Z_n\}_{n\ge 1}$ satisfies a moderate deviation principle with speed $b_n^2$ and rate function
    \begin{equation}\label{rate-func}
        J(x) = \frac{x^2}{2\sigma^2}, \quad x\in\mathbb{R},
    \end{equation}
    where $\sigma^2$ is given by \eqref{eq-sigma2}. That is, for every Borel set $B\subset\mathbb{R}$,
    \begin{align}
        -\inf_{x\in B^\circ} J(x) &\le \liminf_{n\to\infty} \frac{1}{b_n^2}\log\mathbb{P}(Z_n \in B) \notag \\
        &\le \limsup_{n\to\infty} \frac{1}{b_n^2}\log\mathbb{P}(Z_n \in B)
        \le -\inf_{x\in \overline{B}} J(x),
    \end{align}
    where $B^\circ$ and $\overline{B}$ denote the interior and closure of $B$, respectively. In particular, for any $z>0$,
    \begin{equation}\label{tail-bound}
        \lim_{n\to\infty} \frac{1}{b_n^2}\log\mathbb{P}\bigl(|Z_n| \ge z\bigr) = -\frac{z^2}{2\sigma^2}.
    \end{equation}
\end{theorem}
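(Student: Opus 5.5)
The plan is to reduce the MDP to a uniform local estimate for point probabilities plus a tail estimate near the endpoint $X_n=n$. I would first use the exact final-size distribution of the embedded jump chain. The relevant step is the step of $(X,Y)$ from $(i,j)$: it goes to $(i-1,j+1)$ with probability $i/(n+1)$ and to $(i,j-1)$ with probability $(n+1-i)/(n+1)$. Hence $\mathbb P(X_n=k)$ is a sum over lattice paths, and it factors as $\prod_i (i/(n+1))$ times powers of $(n+1-i)/(n+1)$, weighted by automata/ballot-type counting numbers.

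From sharp asymptotics of these numbers, I aim for the uniform point-probability expansion of Proposition \ref{prop:local}. That is, for $k=nx_\infty+\sqrt n\,b_n z+O(1)$ with $|z|\le M$, uniformly,
\[
\log\mathbb P(X_n=k)=-n\,h(k/n)+O(\log n)=-\frac{b_n^2z^2}{2\sigma^2}(1+o(1))+O(\log n).
\]
This uses a Taylor expansion of $h$ at $x_\infty$, with $h(x_\infty)=h'(x_\infty)=0$ and $h''(x_\infty)=1/\sigma^2$, and the cubic remainder $n|k/n-x_\infty|^3=O(b_n^3/\sqrt n)=o(b_n^2)$. I will take this proposition as given. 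One caveat: an $O(\log n)$ error is not $o(b_n^2)$ when $b_n^2\ll\log n$. So I would insist that the local expansion be relative, $\mathbb P(X_n=k)=\frac{C(1+o(1))}{\sqrt n}\exp(-nh(k/n))$ uniformly. The $\log\sqrt n$ prefactor then cancels against the $\sqrt n b_n$ lattice points in a window, and in the lower bound it is absorbed by using a macroscopic number of points.

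\emph{Lower bound.} For open $B$ and $z\in B$, pick $\delta$ with $(z-\delta,z+\delta)\subset B$. Summing the relative local expansion over the roughly $2\delta\sqrt n b_n$ integers $k$ in the window gives $\mathbb P(Z_n\in B)\ge c\,\delta b_n\exp(-b_n^2(|z|+\delta)^2/(2\sigma^2)(1+o(1)))$. Take logarithms, divide by $b_n^2$, and let $\delta\to0$.

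\emph{Upper bound.} For closed $F$, it suffices by exponential tightness to bound $\mathbb P(|Z_n|\ge M)$ and the compact part. I split the range of $k$ into three parts.
\begin{itemize}
\item[(i)] The moderate window $|k-nx_\infty|\le \sqrt n b_n M$. The local expansion bounds each point by $\exp(-b_n^2 (\inf_F J)(1+o(1)))$ times $n^{O(1)}$ points. Here I need the prefactor count, at most $\sqrt n b_n$ points times a $1/\sqrt n$ density, to be subexponential on scale $b_n^2$, and it is.
\item[(ii)] The intermediate region $\sqrt n b_n M\le|k-nx_\infty|\le \eta n$. I would use the same point bound via convexity of $h$ near $x_\infty$, $h(x)\ge c(x-x_\infty)^2$. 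This gives a contribution of at most $n\exp(-cM^2b_n^2)$.
\item[(iii)] The region $|k/n-x_\infty|\ge\eta$. There the LDP gives $\exp(-cn)$, except near $x=1$ where $h\to0$.
\end{itemize}

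The main obstacle is the endpoint contribution from $k$ close to $n$, meaning the rumour dies almost immediately. Directly, $\mathbb P(X_n=n)=\mathbb P(\text{first step is stifling})=1/(n+1)$, and more generally $\mathbb P(X_n=n-m)\asymp$ polynomial in $m/n$. So $\mathbb P(X_n\ge (1-\eta')n)$ is of order $n^{-1}$ times a constant, which is exactly $\exp(-\log n)$. Since $b_n^2/\log n\to0$ by \eqref{bn-endpoint}, this is $\exp(-\omega(b_n^2))$ and hence negligible. I would prove this bound from the exact distribution. Early extinction requires the initial spreader's branching-type exploration to die out while fewer than $\eta' n$ individuals are informed. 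During that phase each informing step has probability $\approx i/(n+1)\approx1$, so the number of spreaders is a random walk with drift about $+1$. Extinction requires a stifling step early, with probability $O(m/n)$ per step at the stage where $m$ are informed. A union bound over the first time $Y$ hits $0$ gives $\mathbb P(X_n\ge(1-\eta')n)\le C/n$. Combining (i)–(iii) with this endpoint bound yields the upper bound and \eqref{tail-bound}.
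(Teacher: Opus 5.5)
There is a genuine gap, and it is in step (ii). Much of the rest is right. Your relative local expansion is correct: Stirling plus Lemma~\ref{lem:dm-asympt} give $C=(2\pi\sigma^2)^{-1/2}$. Your lower bound and the window estimate (i) are right, and your path-counting bound for the endpoint layer is a valid, more probabilistic substitute for the paper's estimate $d_j\le BD^j j^{j+1/2}$.

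The bound you record in (ii), $n\exp(-cM^2b_n^2)$, is vacuous exactly in the regime the theorem assumes. Since $b_n^2/\log n\to0$,
\[
\frac{1}{b_n^2}\log\bigl(n\,e^{-cM^2b_n^2}\bigr)=\frac{\log n}{b_n^2}-cM^2\longrightarrow+\infty
\]
for every fixed $M$. Restoring the $n^{-1/2}$ density does not help, because count times maximum still gives $\sqrt n\,e^{-cM^2b_n^2}=\exp\{(\tfrac12+o(1))\log n\}$. So your argument gives no control of $\mathbb P(M<|Z_n|\le \eta\sqrt n/b_n)$ and hence no exponential tightness. The upper bound for unbounded closed sets, in particular the half-lines needed for \eqref{tail-bound}, is therefore not established. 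Note the tension inside your own proposal: the crude estimate in (ii) would need $\log n=O(b_n^2)$, while your endpoint step needs $\log n\gg b_n^2$.

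The fix is the principle you already applied in (i), pushed further: never pay a polynomial factor in $n$. Sum the Gaussian decay instead of bounding the number of points times the maximum. You need the point bound on the whole macroscopic neighbourhood $|k/n-x_\infty|\le\eta$, not only on the window $|z|\le M$ where you stated the expansion. That bound is $\mathbb P(X_n=k)\le An^{-1/2}\exp\{-c(k-nx_\infty)^2/n\}$, as in \eqref{eq:global-point-upper}--\eqref{eq:h-quadratic-lower}. Comparing the sum with an integral then gives
\[
An^{-1/2}\sum_{|k-nx_\infty|\ge M\sqrt n\,b_n}e^{-c(k-nx_\infty)^2/n}
\le 2A\Bigl(n^{-1/2}+\frac{1}{2cMb_n}\Bigr)e^{-cM^2b_n^2}.
\]
The sum contributes a factor $\sqrt n\int_{Mb_n}^\infty e^{-cs^2}\,ds$, and this cancels the density $n^{-1/2}$, so no power of $n$ survives. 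This is the Gaussian tail comparison in Lemma~\ref{lem:exp-tight}. With it, your decomposition (i)--(iii) together with the endpoint bound does prove the theorem.
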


\begin{remark}\label{rem:scale-condition}
    The additional restriction \eqref{bn-endpoint} is not needed in the local estimate of Proposition \ref{prop:local}, it becomes essential only when one passes from local bounds to the full MDP.  The obstruction is the far-right endpoint.  In fact, from the exact distribution,
    \[
    \mathbb{P}(X_n=n-1)=n^{-2},
    \]
    which corresponds to the event, only one initially ignorant individual ever hears the rumour, and this event also implies $Z_n\to+\infty$. On the $b_n^2$ scale this event has logarithmic cost
    $-2(\log n)/b_n^2$. If, for example, $\log n=o(b_n^2)$, then this endpoint event has logarithmic cost $0$ on the $b_n^2$ scale, contradicting the Gaussian upper bound for sets such as $[L,\infty)$ once $L>0$ is fixed. Conversely, under \eqref{bn-endpoint}, the cost tends to $-\infty$ , making the endpoint super-exponentially negligible. Thus \eqref{bn-endpoint} is a genuine endpoint condition, not an artefact of the proof.
\end{remark}

The remainder of this paper is structured as follows. Section~\ref{sec:prelim} collects several combinatorial preliminaries, including the exact distribution of $X_n$ and the asymptotics of the sequence $d_j$. In Section~\ref{sec:proof} we prove the moderate deviation principle: first a local estimate (Proposition~\ref{prop:local}), then the endpoint-sensitive exponential tightness estimate, the upper and lower bounds, and finally the tail bound. A concluding discussion with possible future work is given in Section~\ref{sec:conclusion}. Technical details of the point probability expansion are deferred to Appendix~\ref{appendix}.

\section{Combinatorial preliminaries}\label{sec:prelim}

In this section we gather the necessary combinatorial tools for the analysis of the final number of ignorants $X_n$. The key ingredients are the exact distribution of $X_n$ derived by Lebensztayn \cite{Lebensztayn15} and the asymptotic behaviour of the auxiliary sequence $d_j$.

\subsection{Exact distribution of $X_n$}

Let $V_n = n - X_n$ denote the number of initially ignorant individuals who heard the rumour by the end of the process. The following exact formula is due to Lebensztayn \cite{Lebensztayn15} (see also Lefevre and Picard \cite{LefevrePicard94}).

\begin{lemma}[Exact distribution]\label{lem:exact}
    For $j = 1,\ldots,n$,
    \begin{equation}\label{eq:exact-dist-V}
        \mathbb{P}(V_n = j) = \frac{(n-1)!}{(n-j)!} \frac{d_j}{n^{2j}},
    \end{equation}
    where the sequence $\{d_j\}_{j\ge 1}$ is defined recursively by $d_1=1$ and for $j\ge 2$,
    \begin{equation}\label{eq:dm-recursive}
        d_j = \frac{j^{2j}}{(j-1)!} - \sum_{i=1}^{j-1} \frac{j^{2(j-i)}}{(j-i)!} d_i.
    \end{equation}
    Consequently, for $k = 0,\ldots,n-1$,
    \begin{equation}\label{eq:exact-dist-X}
        \mathbb{P}(X_n = k) = \frac{(n-1)!}{k!} \frac{d_{n-k}}{n^{2(n-k)}}.
    \end{equation}
\end{lemma}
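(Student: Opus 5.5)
The plan is to pass to the embedded jump chain of $(X(t),Y(t))$ and compute $\mathbb P(V_n=j)$ by summing over lattice paths. From a state $(i,y)$ the next transition is a telling (rate \eqref{11}) or a stifling (rate \eqref{12}). The factor $y$ is common to both rates, so the step probabilities depend only on the current number $i$ of ignorants. The event $\{V_n=j\}$ corresponds exactly to paths with $j$ telling steps and $j+1$ stifling steps, since the spreader count starts at $1$, gains $j$ and ends at $0$. The admissibility constraint is that after $t$ tellings and $s$ stiflings, with $(t,s)\neq(j,j+1)$, we have $1+t-s\ge1$, i.e. $s\le t$. This is a ballot-type constraint.

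Along such a path the $k$-th telling occurs when $n-k+1$ ignorants remain. Its numerator is therefore $n-k+1$, and these numerators multiply to $n!/(n-j)!$, independently of the path. A stifling occurring after $t$ tellings has numerator equal to the current number of non-ignorant contacts, which is an affine function of $t$ alone; it does not depend on $n$ once the normalisation is fixed. The common denominator contributes a power of the total rate per spreader for each of the $2j+1$ steps. Hence
\[
\mathbb P(V_n=j)=\frac{n!}{(n-j)!}\,\frac{c_j}{(\text{normalisation})^{2j+1}},
\qquad
c_j=\sum_{\text{admissible paths}}\ \prod_{\text{stiflings}} w(t),
\]
where $w(t)$ is the stifling weight after $t$ tellings, and $c_j$ depends only on $j$. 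Matching the normalisation with the rates as written in \eqref{11}--\eqref{12} should absorb one factor and convert $n!/n^{2j+1}$ into $(n-1)!/n^{2j}$. After this step, \eqref{eq:exact-dist-V} reduces to the purely combinatorial identity $c_j=d_j$.

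To prove $c_j=d_j$ I would use inclusion--exclusion on the first time the constraint fails. First I would compute the unconstrained weighted count of all arrangements of the $j+1$ stiflings among the $j$ tellings, with the last step a stifling. I expect it to collapse to the closed form $j^{2j}/(j-1)!$ (up to the same normalisation), via an Abel-type summation identity. Then I would subtract the arrangements whose spreader count first hits $0$ after exactly $i<j$ tellings. Each such path splits into an admissible prefix, weighted by $d_i$, and an unconstrained suffix with $j-i$ tellings, weighted by $j^{2(j-i)}/(j-i)!$. Together these give the recursion \eqref{eq:dm-recursive} with $d_1=1$ as base case. Equivalently, one can recognise the sum as a Gontcharoff polynomial evaluated at the linear family of stifling weights and quote the Lefèvre--Picard identities for such polynomials \cite{LefevrePicard94}.

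The main obstacle is this last step: verifying that the unconstrained weighted path sum, and the suffix sums after a first zero, have exactly the stated closed forms in $j$. This requires care with the normalisation ($n$ versus $n+1$) and with the factorials, so that the suffix weight depends on $j$ and $j-i$ only as in \eqref{eq:dm-recursive}. If the direct computation becomes unwieldy, I would fall back on the Gontcharoff/Abel polynomial route, where the decomposition at the first hitting time is exactly the defining recursion of those polynomials.

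Finally, \eqref{eq:exact-dist-X} follows immediately by substituting $j=n-k$ into \eqref{eq:exact-dist-V}, since $X_n=n-V_n$.
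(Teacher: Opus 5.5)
Your reduction to an $n$-free path sum is sound. The gap is in the step you flag as the main obstacle: the closed forms you expect are false, so inclusion--exclusion on the first hitting time cannot reproduce \eqref{eq:dm-recursive} term by term.

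With stifling weight $w(t)=t$, the unconstrained weighted count is $j\,h_j(1,\dots,j)=j\,S(2j,j)$, where $S$ denotes Stirling numbers of the second kind. This is not $j^{2j}/(j-1)!$. For $j=2$ it equals $2\cdot 7=14$, not $16$. The only nonzero suffix for $(j,i)=(2,1)$ is one telling followed by the final stifling, with weight $2$, not $2^2/1!=4$. Only the differences agree: $14-2=16-4=12=c_2$. Since $7/8\neq 1/2$, no common rescaling repairs this. For $j=3$ the numbers $3^6/2!$ and $3^4/2!$ are not even integers, so they cannot be integer-weighted path counts.

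Your decomposition does give a valid recursion, but a different one, and reconciling it with \eqref{eq:dm-recursive} is a nontrivial identity you have not supplied. The Gontcharoff fallback has the same defect. In \eqref{eq:dm-recursive} the coefficients $j^{2(j-i)}/(j-i)!$ depend on the outer index $j$, so it is not a first-passage recursion for a fixed node family.

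The missing idea is that \eqref{eq:dm-recursive} is a normalisation identity, not a path decomposition. The coefficient $c_i$ does not depend on $n$. Also $V_n\in\{1,\dots,n\}$ almost surely, because the first jump is always a telling and absorption occurs within $2n+1$ jumps. So the identity $\sum_{i=1}^{n}\mathbb{P}(V_n=i)=1$ with $n=j$ reads
\[
\sum_{i=1}^{j}\frac{(j-1)!}{(j-i)!}\,\frac{c_i}{j^{2i}}=1 .
\]
Isolate the term $i=j$ and multiply by $j^{2j}/(j-1)!$. This gives exactly \eqref{eq:dm-recursive} for $c_j$, so $c_j=d_j$ by induction from $c_1=1$.

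This works only with stifling rate $y(n-i)$, i.e.\ a spreader cannot meet itself. That rate gives total rate $n$ per spreader and $w(t)=t$. With \eqref{12} as printed, the first jump is a stifling with probability $1/(n+1)$. Then $\mathbb{P}(V_n=0)>0$, and no choice of normalisation yields \eqref{eq:exact-dist-V}. So you should fix $w(t)=t$ explicitly rather than leave it to be ``matched''.

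The paper gives no proof of this lemma; it only cites Lebensztayn and Lef\`evre--Picard. With these corrections your argument becomes a self-contained proof.
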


\subsection{Asymptotics of \texorpdfstring{$d_j$}{d-j}}

To analyse moderate deviations, we need precise asymptotics for $d_j$ as $j\to\infty$. The following results are from \cite{Lebensztayn15} and \cite{BassinoNicaud07}.

\begin{lemma}[Asymptotics of $d_j$]\label{lem:dm-asympt}
    Let $v_\infty = 1 - x_\infty$ and define
    \begin{align*}
        \kappa = 2 - 1/v_\infty,\quad
        \alpha = \sqrt{\frac{1}{2\pi(2v_\infty - 1)}}, \quad
        \beta = \frac{1}{e v_\infty(1 - v_\infty)}.
    \end{align*}
    Then as $j\to\infty$,
    \begin{equation}\label{eq:dm-final-asympt}
        d_j = \alpha \kappa \beta^j j^{j+1/2} \bigl(1 + O(j^{-1})\bigr).
    \end{equation}
\end{lemma}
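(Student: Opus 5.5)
The plan is to convert the triangular recursion \eqref{eq:dm-recursive} into a closed probabilistic representation, and then read off the asymptotics by an exponential change of measure combined with a local limit theorem. The route follows Lefèvre--Picard's treatment of the final-size distribution through Gontcharov polynomials, and the route used by Bassino--Nicaud for counting accessible automata.

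\textbf{Step 1: rewrite the recursion.} Including the $i=j$ term, the recursion reads
\[
\sum_{i=1}^{j}\frac{j^{2(j-i)}}{(j-i)!}\,d_i=\frac{j^{2j}}{(j-1)!}.
\]
This is an Abel--Gontcharov-type system: the left side is a convolution of $(d_i)$ with the kernel $x^m/m!$, evaluated at the $j$-dependent point $x=j^2$.

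I would solve the system by inclusion--exclusion. This should give
\[
d_j=\frac{j^{2j}}{(j-1)!}\,q_j,
\]
where $q_j$ is the probability of a ballot-type event. Concretely, for $j^2$ (or $2j$ after rescaling) uniform points distributed among $j$ boxes, $q_j$ should be the probability that the empirical occupation path stays above a linear boundary, which is exactly the "accessibility" condition of Bassino--Nicaud. As a sanity check, I would verify that substituting this representation back into $\sum_j \mathbb P(V_n=j)=1$ from Lemma~\ref{lem:exact} is consistent.

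\textbf{Step 2: exponential tilt.} With Stirling's formula, $j^{2j}/(j-1)!\sim j^{j+1/2}e^{j}/\sqrt{2\pi}$. So it suffices to show
\[
q_j\sim \sqrt{2\pi}\,\alpha\kappa\,(\beta e^{-1})^{j}\bigl(1+O(j^{-1})\bigr).
\]
I would tilt the underlying multinomial/Poisson variables so that the constrained endpoint becomes typical. The tilting parameter is fixed by the saddle-point equation, which should reduce to $v e^{-2v}=(1-v)e^{-2}$, i.e.\ $v=v_\infty=1-x_\infty$, matching $x_\infty e^{2(1-x_\infty)}=1$.

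The exponential factor $\beta^j$ then comes out as the value of the Cramér transform at this saddle. The consistency check here is that $\beta$ must reproduce $h$ at $x=0$ and $\varrho$, since
\[
\log\beta=-1-\log[v_\infty(1-v_\infty)]=1-\varrho.
\]

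\textbf{Step 3: polynomial prefactor (the main obstacle).} Under the tilted measure I would apply a local central limit theorem to the endpoint constraint, with the prefactor $O(j^{-1/2})$ partially cancelling the $\sqrt{j}$ from Stirling. This should produce the Gaussian constant $\alpha=(2\pi(2v_\infty-1))^{-1/2}$, whose variance $2v_\infty-1=1-2x_\infty$ is the same quantity appearing in $\sigma^2$.

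The hardest part is the path constraint, i.e.\ staying above the boundary. My plan is to show that, after tilting, the walk has strictly positive drift at the boundary. The constraint then contributes a ballot-theorem factor, equal to the drift ratio $1-(1-v_\infty)/v_\infty=\kappa$, rather than a further polynomial decay.

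To get the $O(j^{-1})$ error, I would use an Edgeworth-type one-term local expansion for lattice variables. Alternatively, I would fall back on the singularity analysis of Bassino--Nicaud: the generating function has a square-root-free simple-pole-type singularity after the tilt, which yields a relative error of $O(1/j)$ directly. If the probabilistic ballot argument proves delicate, I would instead derive the generating-function identity from Step 1 and quote their transfer theorem.
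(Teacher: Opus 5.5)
\textbf{Comparison with the paper.} The paper gives no argument for this lemma. It imports it from Lebensztayn and Bassino--Nicaud, i.e.\ from the asymptotic enumeration of accessible automata. Your plan is a genuinely different, self-contained route, and its architecture is sound: the constants come out as you predict. Written cleanly, the route is $d_j=j\,S(2j,j)\,\pi_j$. Here $S(2j,j)$ is the Stirling number of the second kind, and $\pi_j$ is the fraction of partitions of $\{1,\dots,2j\}$ into $j$ blocks in which, for every $t$, the first $t$ elements meet at least $t/2$ blocks. The saddle point with a zero-truncated Poisson tilt gives $S(2j,j)=\alpha\beta^j j^{j-1/2}(1+O(j^{-1}))$, and $\pi_j\to 1-x_\infty/v_\infty=\kappa$ is a gambler's-ruin probability.

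Compared with the citation, your route explains all three constants and gives $d_j\le j\,S(2j,j)$ for free. It also shows how little the paper really uses. The appendix only needs $\log d_j=j\log\beta+(j+\tfrac12)\log j+O(1)$, and Lemma~\ref{lem:exp-tight} only an upper bound. So for the MDP, $\liminf_j\pi_j>0$ would suffice, which is much softer than $\pi_j=\kappa+O(j^{-1})$. The citation, in exchange, supplies the exact constant and the stated error term without any path analysis.

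\textbf{Corrections.} Three points in your write-up need fixing.

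(i) Step~1 needs no inclusion--exclusion. At $n=j$, the recursion \eqref{eq:dm-recursive} is literally $\sum_{i=1}^{j}\mathbb{P}(V_j=i)=1$ for the law in Lemma~\ref{lem:exact}. Hence $d_j=\frac{j^{2j}}{(j-1)!}\mathbb{P}(V_j=j)$, and your ``sanity check'' is in fact the derivation. In the jump chain, each call goes to a uniformly chosen one of the $j$ other individuals and is new exactly when it reaches an ignorant. So $q_j=\mathbb{P}(V_j=j)$ is the probability that $2j$ i.i.d.\ uniform draws from $\{1,\dots,j\}$ show at least $t/2$ distinct values among the first $t$, for every $t\le 2j$. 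This forces surjectivity at $t=2j$, and $q_j=\frac{j!\,S(2j,j)}{j^{2j}}\pi_j$. The ``$j^2$ points'' reading is wrong.

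(ii) The tilt is the zero-truncated Poisson$(\zeta)$ with mean $2$. That is, $\zeta=2(1-e^{-\zeta})$, so $\zeta=2v_\infty$, equivalently $(1-v)e^{2v}=1$. Your equation $ve^{-2v}=(1-v)e^{-2}$ is not satisfied by $v_\infty$. The tilted variable has variance $2(2v_\infty-1)$. Its local-CLT factor $j^{-1/2}$ cancels the $\sqrt{j}$ coming from $(2j)!$ in $j!\,S(2j,j)/j^{2j}=(2j)!\,j^{-2j}[z^{2j}](e^z-1)^j$, not the $\sqrt{j}$ in $j^{2j}/(j-1)!$. So $q_j$ carries no power of $j$. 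This bookkeeping matters: the exponent $\tfrac12$ in $j^{j+1/2}$ produces the $-\tfrac12\log n$ in Lemma~\ref{lem:point}, which dominates $b_n^2$ under \eqref{bn-endpoint}.

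(iii) Step~3 is where all the work lies, and ``positive drift at the boundary'' is not yet an argument. Under the surjectivity conditioning, $D_t=2R_t-t$ is a bridge pinned at $D_{2j}=0$, not a random walk. You must show three things:
\begin{itemize}
\item the fluid profile $(1-e^{-\zeta s})/(1-e^{-\zeta})$ lies strictly above $s$ on $(0,1)$, so bulk violations are exponentially rare;
\item violations in the initial window cost only $O(j^{-2})$;
\item in the terminal window, read backwards from $t=2j$, the position at distance $m$ opens a new block with probability $x_\infty(1+O(m/j))$, almost independently of its neighbours.
\end{itemize}
The last point makes the reversed path, up to $O(j^{-1})$, a walk with up-probability $v_\infty$, which survives with probability $1-x_\infty/v_\infty$. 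The Edgeworth term you mention controls only the surjectivity factor, not this one. Your ``simple-pole'' description of the cited analysis is a guess, and if you fall back on the citation you are doing exactly what the paper does.
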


\subsection{Point probability estimates}

The following lemma provides the key local estimate needed for the moderate deviation principle. Its proof involves lengthy Taylor expansions, the detailed calculations are deferred to Appendix~\ref{appendix}. A crucial point for the subsequent proofs is that this estimate holds {uniformly} for $z$ in any compact interval.

\begin{lemma}[Point probability estimate]\label{lem:point}
    Assume \eqref{bn-weak}. For any compact interval $K \subset \mathbb{R}$, the following holds uniformly for $z \in K$: define
    \[
    k_n(z) = \bigl\lfloor n x_\infty + z b_n \sqrt{n} \bigr\rfloor,
    \]
    then as $n \to \infty$,
    \[
    \log \mathbb{P}(X_n = k_n(z)) = -\frac12 \log n - \frac{z^2 b_n^2}{2\sigma^2} + o(b_n^2),
    \]
    where the $o(b_n^2)$ term is uniform for $z \in K$. Equivalently,
    \[
    \mathbb{P}(X_n = k_n(z)) = n^{-1/2} \exp\left(-\frac{z^2 b_n^2}{2\sigma^2} + o(b_n^2)\right)
    \]
  holds uniformity in $z \in K$.
\end{lemma}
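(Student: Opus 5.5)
Throughout, $k=k_n(z)$ and $j=n-k$. The plan is to insert the exact formula \eqref{eq:exact-dist-X} of Lemma~\ref{lem:exact}, together with the asymptotics \eqref{eq:dm-final-asympt} of Lemma~\ref{lem:dm-asympt}, and apply Stirling's formula to every factorial. Write $x=k/n = x_\infty + z b_n/\sqrt n + O(1/n)$ and $v=1-x=j/n$. Uniformly for $z\in K$ we have $v\to v_\infty\in(1/2,1)$ (since $x_\infty<1/2$), so $j\to\infty$ and both $k$ and $j$ are of order $n$. Hence Stirling applies to $k!$ and $(n-1)!$, and Lemma~\ref{lem:dm-asympt} applies to $d_j$ with relative error $1+O(1/n)$, uniformly in $z$. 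This gives
\[
\log\mathbb P(X_n=k)=\log(n-1)!-\log k!+\log(\alpha\kappa)+j\log\beta+(j+\tfrac12)\log j-2j\log n+O(n^{-1}).
\]

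The next step is to expand each term. Use $\log(n-1)!=n\log n-n-\tfrac12\log n+\tfrac12\log(2\pi)+O(1/n)$ and $\log k!=k\log k-k+\tfrac12\log(2\pi k)+O(1/n)$, and substitute $k=nx$, $j=nv$. All $n\log n$ terms cancel, because $n-k+j-2j=0$ exactly. What remains is
\[
\log\mathbb P(X_n=k)= -n\,\bigl[x\log x+(1-x)(\varrho-\log(1-x))\bigr]-\tfrac12\log n+\tfrac12\log\tfrac{v}{x}+\log(\alpha\kappa)+O(n^{-1}+\ldots),
\]
where the constant $\varrho$ comes from $\log\beta=-1-\log(x_\infty(1-x_\infty))$. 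The linear coefficient in $j$ is $\log\beta+1$, and together with the $-n+k$ term from Stirling this reproduces exactly $-n(1-x)\varrho$ with $\varrho=2+\log[x_\infty(1-x_\infty)]$. So the exponential part is $-n\,h(x)$, consistent with the LDP of \cite{Lebensztayn15}. I will verify this bookkeeping carefully, since a sign slip here would destroy the cancellation. The remaining terms $\tfrac12\log(v/x)$ and $\log(\alpha\kappa)$ are $O(1)$ uniformly in $z\in K$, and $O(1)=o(b_n^2)$ because $b_n\to\infty$.

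It then remains to Taylor expand $h$ around $x_\infty$. Since $h(x_\infty)=h'(x_\infty)=0$ and $h''(x_\infty)=1/\sigma^2$, and $h'''$ is bounded on a neighbourhood of $x_\infty$,
\[
n\,h(x)=\frac{n(x-x_\infty)^2}{2\sigma^2}+O\bigl(n|x-x_\infty|^3\bigr).
\]
With $x-x_\infty=zb_n/\sqrt n+O(1/n)$, the quadratic term equals $z^2b_n^2/(2\sigma^2)+O(b_n/\sqrt n)$; here the floor error contributes $O(zb_n/\sqrt n)=o(1)$. The cubic term is $O(b_n^3/\sqrt n)=o(b_n^2)$ by \eqref{bn-weak}. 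All constants depend only on $\sup_K|z|$, so the error terms are uniform, which proves the lemma.

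The main obstacle is the first expansion: making sure the $O(j^{-1})$ error in Lemma~\ref{lem:dm-asympt} and the Stirling errors are uniform. This holds because $j/n\in[v_\infty-\delta,v_\infty+\delta]$ for all large $n$ and all $z\in K$. The other delicate point is the exact cancellation of the $n\log n$ terms and the identification of the linear constant with $\varrho$. That identification can also be checked independently: it is forced by $h(x_\infty)=0$, since the point probabilities near $x_\infty$ must be polynomially, not exponentially, small.
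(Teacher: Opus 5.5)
Your proposal is correct. It uses the same ingredients as the paper (the exact law \eqref{eq:exact-dist-X}, Stirling's formula, and Lemma~\ref{lem:dm-asympt}) but organizes the computation differently.

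The paper expands each logarithmic term separately in powers of $\eta_n/n$, where $\eta_n=k_n-nx_\infty$. It then checks cancellation order by order ($n\log n$, $\eta_n\log n$, $n$, $\eta_n$, and finally $\eta_n^2$), invoking $x_\infty e^{2v_\infty}=1$ and $\log\beta=-1-\log(x_\infty v_\infty)$ at each stage.

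You instead first assemble the exact exponent into $-nh(k/n)-\tfrac12\log n+\tfrac12\log\frac{1-k/n}{k/n}+\log(\alpha\kappa)+O(n^{-1})$, and then do a single Taylor expansion of $h$. This identity holds uniformly for $k/n$ in any compact subset of $(0,1)$. In your version the linear cancellations become $h(x_\infty)=h'(x_\infty)=0$, and the variance appears as $h''(x_\infty)=1/\sigma^2$.

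Your route buys several things:
\begin{itemize}
\item It is shorter and less error-prone.
\item It makes the MDP rate visibly the quadratic approximation of the LDP rate.
\item As a by-product it gives the uniform bound \eqref{eq:global-point-upper} used in Lemma~\ref{lem:exp-tight}, which the paper only sketches.
\item It identifies the bounded term as $-\log(\sigma\sqrt{2\pi})+o(1)$, i.e.\ the local CLT constant.
\end{itemize}
The paper's route avoids needing the closed form of $h$, at the cost of more bookkeeping.

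One small slip in your description of the bookkeeping: combining $j\log\beta$ with $-n+k=-j$ gives $j(\log\beta-1)=-j\varrho$. So the relevant coefficient is $\log\beta-1$, not $\log\beta+1$. Your displayed formula is nonetheless right.
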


\section{Proof of the Moderate Deviation Principle}\label{sec:proof}

With the combinatorial estimates established in previous section, we now proceed to the proof of Theorem~\ref{thm:main}. The strategy is standard for moderate deviation principles: we first obtain a local estimate for probabilities of small intervals, then extend to arbitrary Borel sets via approximation arguments.

\subsection{Local moderate deviation estimate}

The following strengthened local estimate is the main input for the MDP.  It is stated for fixed intervals, which is the form needed in the finite-cover argument for closed sets.

\begin{proposition}[Local MDP]\label{prop:local}
    Let $I=(a,b)$ be a non-empty bounded open interval. Assume \eqref{bn-weak}, then
    \begin{align}
        \liminf_{n\to\infty}\frac1{b_n^2}\log \mathbb{P}(Z_n\in I)
        &\ge -\inf_{z\in I} \frac{z^2}{2\sigma^2}, \label{eq:local-lower-fixed}\\
        \limsup_{n\to\infty}\frac1{b_n^2}\log \mathbb{P}(Z_n\in I)
        &\le -\inf_{z\in \overline I} \frac{z^2}{2\sigma^2}. \label{eq:local-upper-fixed}
    \end{align}
    In particular, for any $x\in\mathbb{R}$,
    \begin{equation}\label{eq:local-MDP}
        \lim_{\delta\downarrow 0}\lim_{n\to\infty}
        \frac{1}{b_n^2}\log \mathbb{P}\bigl(Z_n \in (x-\delta, x+\delta)\bigr) = -\frac{x^2}{2\sigma^2}.
    \end{equation}
\end{proposition}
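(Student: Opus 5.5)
The plan is to sum the uniform point estimate of Lemma~\ref{lem:point} over the lattice points corresponding to $Z_n\in I$. Since $Z_n = (X_n - n x_\infty)/(b_n\sqrt n)$, the event $\{Z_n\in I\}$ equals $\{X_n = k\}$ for integers $k$ with $n x_\infty + a b_n\sqrt n < k < n x_\infty + b b_n\sqrt n$. There are at most $(b-a)b_n\sqrt n + 1$ such integers, and at least $(b-a)b_n\sqrt n - 1$. Every such $k$ can be written as $k = k_n(z)$ with $z = (k - n x_\infty)/(b_n\sqrt n)\in[a,b]$: for this $z$ the quantity $n x_\infty + z b_n\sqrt n$ is exactly the integer $k$, so the floor has no effect. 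Hence Lemma~\ref{lem:point}, applied with the compact interval $K=[a,b]$, gives, uniformly over these $k$,
\[
\mathbb{P}(X_n = k) = n^{-1/2}\exp\Bigl(-\frac{z^2 b_n^2}{2\sigma^2} + o(b_n^2)\Bigr).
\]

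\textbf{Upper bound.} Bound the sum by the number of terms times the largest term. This gives
\[
\mathbb{P}(Z_n\in I)\le \bigl((b-a)b_n\sqrt n+1\bigr)\, n^{-1/2}\exp\Bigl(-\frac{b_n^2}{2\sigma^2}\inf_{z\in\overline I} z^2 + o(b_n^2)\Bigr).
\]
The prefactor is $O(b_n)$, and $\log b_n = o(b_n^2)$ because $b_n\to\infty$. Dividing by $b_n^2$ and taking $\limsup$ yields \eqref{eq:local-upper-fixed}. The factors $\sqrt n$ and $n^{-1/2}$ cancel exactly, so no condition relating $\log n$ to $b_n^2$ is needed here.

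\textbf{Lower bound.} Fix $z_0\in I$ and pick $\eta>0$ with $[z_0-\eta,z_0+\eta]\subset I$. Keep only the lattice points with $z\in[z_0-\eta,z_0+\eta]$. There are at least $2\eta b_n\sqrt n - 1\ge 1$ of them for large $n$. Each contributes at least
\[
n^{-1/2}\exp\Bigl(-\frac{b_n^2}{2\sigma^2}\max_{|z-z_0|\le\eta} z^2 + o(b_n^2)\Bigr).
\]
The total is therefore at least $\eta b_n\exp(\cdots)$, and this prefactor is again negligible on the $b_n^2$ scale. We then let $\eta\downarrow0$ and optimise over $z_0\in I$, which gives \eqref{eq:local-lower-fixed}. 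Here the infimum over $I$ equals the infimum over $\overline I$ by continuity.

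\textbf{The limit \eqref{eq:local-MDP}.} Apply both bounds with $I=(x-\delta,x+\delta)$. The upper bound is $-\inf_{|z-x|\le\delta} z^2/(2\sigma^2)$ and the lower bound is $-\inf_{|z-x|<\delta} z^2/(2\sigma^2)$. Both tend to $-x^2/(2\sigma^2)$ as $\delta\downarrow0$. The statement writes $\lim_n$, but it should be read as holding with $\liminf$ and $\limsup$ separately, and both converge to the same value as $\delta\downarrow0$.

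\textbf{Main obstacle.} The main obstacle is the uniformity in Lemma~\ref{lem:point}, in particular making sure that every lattice point is covered by some $k_n(z)$ with $z\in K$. Since each $k$ is exactly $k_n(z)$ for its own $z$, this reduces to the uniform $o(b_n^2)$ error, which is assumed. The remaining work is only the bookkeeping that the polynomial prefactors are $b_n^{O(1)}$, which holds because the $\sqrt n$ count of terms cancels the $n^{-1/2}$ in each point probability.
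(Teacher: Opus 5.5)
Your proof is correct and follows essentially the same route as the paper: both sum the uniform point estimate of Lemma~\ref{lem:point} over the $O(b_n\sqrt n)$ relevant lattice points. The upper bound uses (number of terms)$\times$(largest term), the lower bound restricts to a small subinterval around $z_0$, and in both the $\sqrt n$ count cancels the $n^{-1/2}$ prefactor, leaving only a negligible $O(b_n)$ factor. Your observation that each lattice point is exactly $k_n(z_k)$ for its own $z_k\in[a,b]$ is a slightly cleaner way to invoke the uniformity than the paper's enlarged compact $K=[a-1,b+1]$ with $z\in I+o(1)$, and reading the $\lim_n$ in \eqref{eq:local-MDP} as separate $\liminf/\limsup$ bounds is the right interpretation.
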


\begin{proof}
    We observe the event $\{Z_n\in I\}$ is the union of $\{X_n=k\}$ with the lattice points
    \[
        k=\bigl\lfloor n x_\infty+z b_n\sqrt n\bigr\rfloor,
        \qquad z\in I+o(1),
    \]
 and the number of such lattice points is at most $(b-a)b_n\sqrt n+3$.

    \medskip
    \noindent\textit{Upper bound.}
    Let $K=[a-1,b+1]$. For all large $n$, every point contributing to $\{Z_n\in I\}$ can be written as $k=\lfloor n x_\infty+z_k b_n\sqrt n\rfloor$ with $z_k\in K$. Lemma~\ref{lem:point}, uniformly on $K$, gives that for every $\epsilon>0$ and all large $n$,
    \[
        \mathbb{P}(X_n=k)
        \le n^{-1/2}\exp\left\{-b_n^2\left(\frac{z_k^2}{2\sigma^2}-\epsilon\right)\right\}.
    \]
    Since the admissible $z_k$ lie in an $o(1)$-neighbourhood of $\overline I$, continuity of $z\mapsto z^2$ yields
    \[
        \mathbb{P}(Z_n\in I)
        \le C b_n \exp\left\{-b_n^2\left(\inf_{z\in \overline I}\frac{z^2}{2\sigma^2}-2\epsilon\right)\right\}
    \]
    for all large $n$ and a constant $C$ depending on $I$ only. After division by $b_n^2$ and passage to the limit, the term $\log(Cb_n)/b_n^2$ vanishes, then let $\epsilon\downarrow0$ will get the upper bound.

    \medskip
    \noindent\textit{Lower bound.}
    Fix $z_0\in I$ and choose $\rho>0$ such that $[z_0-\rho,z_0+\rho]\subset I$. For large $n$, all integers of the form
    \[
        k=\bigl\lfloor n x_\infty+z b_n\sqrt n\bigr\rfloor,
        \qquad z\in [z_0-\rho,z_0+\rho],
    \]
    contribute to $\{Z_n\in I\}$, and there are at least $\rho b_n\sqrt n$ distinct such integers. By the uniform lower bound in Lemma~\ref{lem:point}, for every $\epsilon>0$ and all large $n$ each such integer has probability at least
    \[
        n^{-1/2}\exp\left\{-b_n^2\left(\sup_{|z-z_0|\le \rho}\frac{z^2}{2\sigma^2}+\epsilon\right)\right\}.
    \]
    Hence
    \[
        \mathbb{P}(Z_n\in I)
        \ge \rho b_n\exp\left\{-b_n^2\left(\sup_{|z-z_0|\le \rho}\frac{z^2}{2\sigma^2}+\epsilon\right)\right\}.
    \]
    Letting $n\to\infty$, then $\epsilon\downarrow0$, and finally $\rho\downarrow0$, we get
    \[
        \liminf_{n\to\infty}\frac1{b_n^2}\log\mathbb{P}(Z_n\in I)
        \ge -\frac{z_0^2}{2\sigma^2}.
    \]
    Since $z_0\in I$ is arbitrary, \eqref{eq:local-lower-fixed} follows. Taking $I=(x-\delta,x+\delta)$ and then $\delta\downarrow0$ proves \eqref{eq:local-MDP}.
\end{proof}

%\begin{remark}[Role of the scale assumptions]\label{rem:weak-scale}
%    Proposition~\ref{prop:local} and the compact-set upper and lower bounds that follow from it use only \eqref{bn-weak}.  The stronger endpoint condition %\eqref{bn-endpoint} enters exactly in Lemma~\ref{lem:exp-tight}, where the endpoint layer $V_n=O(n)$ with very small proportionality constant is removed.  In %particular, without \eqref{bn-endpoint} one still obtains the MDP bounds for bounded Borel sets, but the full upper bound on arbitrary closed subsets of %$\mathbb R$ may fail because of the event $V_n=1$.
%\end{remark}

\subsection{An endpoint-sensitive exponential tightness estimate}

The next lemma supplies the tightness input needed for the full upper bound.  The estimate deserves some care.  Although $H(1)=+\infty$ in \eqref{LDP}, the function $h$ on $[0,1)$ satisfies $h(x)\downarrow0$ as $x\uparrow1$.  Consequently, the LDP alone does not yield a negative exponential bound for right tails that include points arbitrarily close to $1$.  The endpoint contribution is only polynomially small and could  be estimated directly from the exact distribution of $X_n$.

\begin{lemma}[Exponential tightness]\label{lem:exp-tight}
    Assume \eqref{bn-condition}, then for every $C>0$ there exists $L>0$ such that
    \[
        \limsup_{n\to\infty}\frac1{b_n^2}\log\mathbb{P}(|Z_n|>L)\le -C.
    \]
\end{lemma}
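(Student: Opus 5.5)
We must show that $\mathbb{P}(|Z_n|>L)$ decays like $e^{-Cb_n^2}$ once $L$ is large. The plan is to split the range of $X_n$ into three zones: a moderate zone, a bulk large-deviation zone, and the endpoint layer near $X_n=n$. Write $k=nx_\infty+zb_n\sqrt n$; the event $\{|Z_n|>L\}$ is $\{|X_n-nx_\infty|>Lb_n\sqrt n\}$. Fix a small $\eta>0$ and a small $\theta>0$, both to be chosen later.

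\emph{Moderate zone.} Consider $Lb_n\sqrt n<|X_n-nx_\infty|\le \eta n$. Here I would not use Lemma~\ref{lem:point}, since it is only uniform on compacts. Instead I would bound the point probabilities directly from Lemma~\ref{lem:exact} and Lemma~\ref{lem:dm-asympt}, using Stirling's formula. For $k=nx$ with $x$ in a fixed compact subset of $(0,1)$, this gives
\[
\mathbb{P}(X_n=k)=\exp\{-n h(k/n)+O(\log n)\}
\]
uniformly. This is the same computation that underlies the LDP of \cite{Lebensztayn15}, now done pointwise with explicit error terms. Since $h(x_\infty)=h'(x_\infty)=0$ and $h''(x_\infty)=1/\sigma^2>0$, we may choose $\eta$ small enough that $h(x)\ge (x-x_\infty)^2/(4\sigma^2)$ for $|x-x_\infty|\le\eta$. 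Each point in the moderate zone then has probability at most
\[
\exp\{-L^2b_n^2/(4\sigma^2)+O(\log n)\}.
\]
There are at most $n$ such points, so the total is bounded by $\exp\{-L^2b_n^2/(4\sigma^2)+O(\log n)\}$. The $O(\log n)$ term is harmless when $b_n^2\gg\log n$. It is a problem in the opposite regime, which \eqref{bn-condition} actually forces, since $b_n^2/\log n\to0$. The fix is to keep the Gaussian-type prefactor $n^{-1/2}$ honestly. Summing a bound of the form
\[
n^{-1/2}e^{-(k-nx_\infty)^2/(4\sigma^2 n)}
\]
over $k$ gives a Gaussian tail $\lesssim e^{-L^2b_n^2/(4\sigma^2)}$ with no polynomial loss. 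So the sharp step is a uniform local bound
\[
\mathbb{P}(X_n=k)\le Cn^{-1/2}e^{-nh(k/n)}
\]
for $k/n$ in a compact subset of $(0,1)$. This should come from the explicit Stirling expansion: the factor $j^{1/2}$ in $d_j$ and the Stirling factors combine to $O(n^{-1/2})$.

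\emph{Bulk zone.} Consider $|k/n-x_\infty|>\eta$ with $k\le n(1-\theta)$. By the same uniform bound, each point has probability at most
\[
Cn^{-1/2}e^{-n c(\eta,\theta)},\qquad c(\eta,\theta)=\min\{h(x):|x-x_\infty|\ge\eta,\ x\le1-\theta\}>0.
\]
The total over at most $n$ points is $e^{-cn/2}$, which is negligible on the $b_n^2$ scale because $b_n^2=o(n)$.

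\emph{Endpoint layer.} Consider $V_n=j\le\theta n$. By \eqref{eq:exact-dist-V} and Lemma~\ref{lem:dm-asympt},
\[
\mathbb{P}(V_n=j)\le C\,\frac{n^{j-1}}{n^{2j}}\,\beta^j j^{j+1/2}=\frac{C}{n}\,j^{1/2}\Bigl(\frac{\beta j}{n}\Bigr)^j,
\]
using $(n-1)!/(n-j)!\le n^{j-1}$. For $\theta<1/(2\beta)$ the terms with $j\ge2$ are bounded by $C n^{-1}j^{1/2}2^{-j}(\beta j/n)^{1}\cdot$(geometric), so the whole layer sums to $O(n^{-2})$; the term $j=1$ gives exactly $n^{-2}$. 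Therefore
\[
\frac{1}{b_n^2}\log\mathbb{P}(V_n\le\theta n)\le\frac{-2\log n+O(1)}{b_n^2}\to-\infty,
\]
and this is exactly where \eqref{bn-endpoint} is used.

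Combining the three zones and choosing $L$ with $L^2/(4\sigma^2)>C$ gives the claim. The main obstacle is the uniform $n^{-1/2}e^{-nh}$ point bound: it must track the subexponential factors precisely, rather than accepting an $e^{O(\log n)}$ slack, which would be fatal under $b_n^2=o(\log n)$.
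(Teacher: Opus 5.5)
Your proposal is correct and follows essentially the same three-zone argument as the paper: the sharp pointwise bound $\mathbb{P}(X_n=k)\le A n^{-1/2}e^{-nh(k/n)}$ near $x_\infty$, combined with the quadratic lower bound on $h$ and a Gaussian tail sum; exponential smallness at fixed distance from $x_\infty$; and the exact formula with $d_j\le B D^j j^{j+1/2}$ on the endpoint layer $V_n\le\theta n$, which is where $b_n^2/\log n\to0$ is used. The differences are cosmetic: you bound the fixed-distance zone pointwise instead of citing Lebensztayn's LDP (near $k=0$ the prefactor there is polynomial rather than $n^{-1/2}$, which is harmless against $e^{-cn}$), and you extract a sharper $O(n^{-2})$ endpoint bound where the paper settles for $O(n^{-1})$.
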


\begin{proof}
    We first record a uniform pointwise upper bound in a fixed neighbourhood of $x_\infty$. From the exact formula \eqref{eq:exact-dist-X}, Lemma~\ref{lem:dm-asympt}, and Stirling's formula, there exist $r>0$ and $A<\infty$ such that, for all large $n$ and all $0\le k\le n-1$ with $|k/n-x_\infty|\le r$,
    \begin{equation}\label{eq:global-point-upper}
        \mathbb{P}(X_n=k)\le A n^{-1/2} \exp\{-n h(k/n)\}.
    \end{equation}
    Indeed, the logarithm of the left-hand side is the expansion in Appendix~\ref{appendix} with $k/n$ kept in a compact neighbourhood of $x_\infty$; the Stirling remainders and the relative error in $d_j$ are uniformly bounded there, while the prefactor is of order $n^{-1/2}$.

    Since $h(x_\infty)=h'(x_\infty)=0$ and $h''(x_\infty)=1/\sigma^2$, reducing $r$ if necessary gives
    \begin{equation}\label{eq:h-quadratic-lower}
        h(u)\ge \frac{(u-x_\infty)^2}{4\sigma^2},
        \qquad |u-x_\infty|\le r.
    \end{equation}
    Hence, for $L>0$ and all large $n$,
    \begin{align*}
        \mathbb{P}\left(L<|Z_n|\le r\frac{\sqrt n}{b_n}\right)
        &\le A n^{-1/2}\sum_{|k-nx_\infty|>L b_n\sqrt n}
        \exp\left\{-\frac{(k-nx_\infty)^2}{4\sigma^2 n}\right\} \\
        &\le A' \exp\left\{-\frac{L^2 b_n^2}{8\sigma^2}\right\},
    \end{align*}
    where the last inequality follows by comparison with a Gaussian tail sum. Therefore
    \begin{equation}\label{eq:moderate-tail}
        \limsup_{n\to\infty}\frac1{b_n^2}
        \log \mathbb{P}\left(L<|Z_n|\le r\frac{\sqrt n}{b_n}\right)
        \le -\frac{L^2}{8\sigma^2}.
    \end{equation}

    It remains to control the fixed-distance part $|X_n/n-x_\infty|>r$.  Choose $\delta>0$ so small that $\delta<v_\infty-r$.  On the set
    \[
        \{X_n/n\le x_\infty-r\}
        \cup
        \{x_\infty+r\le X_n/n\le 1-\delta\},
    \]
    the LDP upper bound of Lebensztayn \cite{Lebensztayn15} gives an estimate of order $\exp\{-cn\}$ for some $c>0$; this is superexponential on the $b_n^2$ scale because \eqref{bn-weak} implies $b_n^2/n\to0$.

    It remains only to estimate the endpoint layer $X_n/n>1-\delta$, equivalently $V_n<\delta n$. As explained in Remark \ref{rem:scale-condition}, condition (\ref{bn-endpoint}) enters precisely at this stage to control the contribution of the endpoint layer $V_n=O(n)$. This cannot be obtained from the LDP alone, because $\inf_{1-\delta<x<1}h(x)$ tends to zero as $\delta\downarrow0$.  From Lemma~\ref{lem:dm-asympt} there are constants $B,D<\infty$ such that $d_j\le B D^j j^{j+1/2}$ for all $j\ge1$. Using \eqref{eq:exact-dist-V}, for $1\le j\le n$,
    \[
        \mathbb{P}(V_n=j)
        \le \frac{d_j}{n^{j+1}}
        \le \frac{B j^{1/2}}{n}\left(\frac{D j}{n}\right)^j.
    \]
    Decrease $\delta$ further, if necessary, so that $D\delta<1/2$.  Summing the preceding bound over $1\le j\le \delta n$ gives
    \begin{equation}\label{eq:endpoint-poly}
        \mathbb{P}(V_n\le \delta n)\le\frac{B}{n}\sum_{j\ge1}j^{1/2}2^{-j}\le\frac{B'}{n}
    \end{equation}
    for all large $n$. Because $b_n^2/\log n\to0$, the bound \eqref{eq:endpoint-poly} is superexponential on the $b_n^2$ scale. Combining this endpoint estimate with \eqref{eq:moderate-tail} and the fixed-distance LDP bound proves the lemma after choosing $L$ so large that $L^2/(8\sigma^2)>C$.
\end{proof}

\subsection{Upper bound}

We now prove the upper bound in Theorem \ref{thm:main}: for any closed set $F \subset \mathbb{R}$,
\begin{equation}\label{upper-bound}
    \limsup_{n\to\infty} \frac{1}{b_n^2}\log \mathbb{P}(Z_n \in F)
    \le -\inf_{x \in F} J(x).
\end{equation}

\begin{proof}
    Let $\gamma=\inf_{x\in F}J(x)$. If $F=\varnothing$ there is nothing to prove. Fix $\epsilon>0$. By Lemma~\ref{lem:exp-tight}, choose $L>0$ so large that
    \[
        \limsup_{n\to\infty}\frac1{b_n^2}\log\mathbb{P}(|Z_n|>L)
        \le -\gamma-\epsilon,
    \]
    with the convention that, when $\gamma=\infty$, the right-hand side is replaced by an arbitrary negative constant and then sent to $-\infty$.

    The compact set $F_L=F\cap[-L,L]$ can be covered by finitely many open intervals $I_1,\ldots,I_N$ of radius $\delta$. Applying Proposition~\ref{prop:local} to each interval and using the union bound,
    \[
        \limsup_{n\to\infty}\frac1{b_n^2}\log\mathbb{P}(Z_n\in F_L)
        \le -\min_{1\le j\le N}\inf_{z\in \overline{I_j}} J(z).
    \]
    Choose the cover so that every $I_j$ is centred at a point of $F_L$. Since $J$ is uniformly continuous on $[-L-1,L+1]$, letting $\delta\downarrow0$ yields
    \[
        \limsup_{n\to\infty}\frac1{b_n^2}\log\mathbb{P}(Z_n\in F_L)
        \le -\inf_{z\in F_L}J(z).
    \]
    Consequently,
    \begin{align*}
        \limsup_{n\to\infty}\frac1{b_n^2}\log\mathbb{P}(Z_n\in F)
        \le \max\left\{-\inf_{z\in F_L}J(z),\,-\gamma-\epsilon\right\}\le -\gamma.
    \end{align*}
    This proves \eqref{upper-bound}. In the case $\gamma=\infty$, the same argument with an arbitrary tightness level gives an arbitrarily negative upper bound, and hence the required value $-\infty$.
\end{proof}

\subsection{Lower bound}

We now prove the lower bound in Theorem \ref{thm:main}: for any open set $G \subset \mathbb{R}$,
\begin{equation}\label{lower-bound}
    \liminf_{n\to\infty} \frac{1}{b_n^2}\log \mathbb{P}(Z_n \in G)
    \ge -\inf_{x \in G} J(x).
\end{equation}

\begin{proof}
    If $G=\varnothing$ there is nothing to prove. Fix $x_0\in G$. Since $G$ is open, there exists $\delta>0$ such that
    $(x_0-\delta,x_0+\delta)\subset G$. Proposition~\ref{prop:local} gives
    \[
        \liminf_{n\to\infty}\frac1{b_n^2}
        \log\mathbb{P}(Z_n\in G)
        \ge
        \liminf_{n\to\infty}\frac1{b_n^2}
        \log\mathbb{P}\bigl(Z_n\in(x_0-\delta,x_0+\delta)\bigr)
        \ge -\inf_{|z-x_0|<\delta}J(z).
    \]
    Letting $\delta\downarrow0$ gives
    \[
        \liminf_{n\to\infty}\frac1{b_n^2}
        \log\mathbb{P}(Z_n\in G)\ge -J(x_0).
    \]
    Finally take the supremum over $x_0\in G$, equivalently the negative of the infimum of $J$ over $G$, to obtain \eqref{lower-bound}.
\end{proof}

\subsection{Proof of the tail bound}

The tail bound follows immediately from the moderate deviation principle by taking $B = (-\infty, -z] \cup [z, \infty)$. Since $J$ is continuous and even, $\inf_{|x| \ge z} J(x) = J(z) = z^2/(2\sigma^2)$. The MDP bounds then give
\[
    \lim_{n\to\infty} \frac{1}{b_n^2} \log \mathbb{P}(|Z_n| \ge z) = -\frac{z^2}{2\sigma^2}.
\]

This completes the proof of Theorem \ref{thm:main}.

\section{Concluding remarks}\label{sec:conclusion}

In this note we have established the moderate deviation principle for the final proportion of ignorants in the classical Maki--Thompson rumour model. Two directions for future research seem particularly natural. First, the homogeneous-mixing assumption is an idealisation, it would be both challenging and rewarding to extend the analysis to sparse random networks, such as Erd\H{o}s-R\'{e}nyi graphs or configuration models. Recent work (\cite{APRT,DPZMV,FJMRM2022}) indicates that network topology can induce qualitatively different phase transitions in information spreading, and the interplay between rare events and graph heterogeneity may lead to non-standard moderate deviation behaviour. Second, one could investigate moderate deviations for variants of the model, including those with stochastic stifling mechanisms or multiple initial spreaders
\cite{LebensztaynMachadoRodriguez11,LebensztaynMachadoRodriguezRandom11,LeRo2025}. For such variants closed-form final-size distributions are typically unavailable, so the combinatorial approach used here may not transfer directly. The corresponding large and moderate deviation principles are interesting questions.

\appendix

\section{Detailed expansion for Lemma \ref{lem:point}}\label{appendix}

In this appendix we provide the detailed Taylor expansion that leads to the point probability estimate. Recall that throughout the appendix we assume \eqref{bn-weak}.
Set $v_\infty = 1 - x_\infty$ and write
\[
k_n = n x_\infty + \eta_n, \qquad m_n = n v_\infty - \eta_n, \qquad
\eta_n = z b_n \sqrt{n} + \delta_n, \quad \delta_n = O(1),
\]
where $z$ is a fixed real number. From the assumptions we have
\[
\frac{b_n^2}{n} \to 0, \quad \frac{\eta_n}{n} = O\Bigl(\frac{b_n}{\sqrt{n}}\Bigr) = o(1).
\]

By Lemma \ref{lem:exact} and Lemma \ref{lem:dm-asympt},
\[
\mathbb{P}(X_n = k_n) = \frac{(n-1)!}{k_n!} \cdot \frac{d_{m_n}}{n^{2m_n}}, \qquad
d_{m_n} = \alpha \kappa \beta^{m_n} m_n^{m_n+1/2} \bigl(1 + O(m_n^{-1})\bigr).
\]
Stirling's formula gives, for any large enough positive integer $\ell$,
\[
\log \ell! = \ell \log \ell - \ell + \frac12 \log(2\pi \ell) + O(\ell^{-1}).
\]
Applying this to $(n-1)!$ and $k_n!$, we obtain
\begin{align*}
\log \mathbb{P}(X_n=k_n) &= \frac12 \log\frac{n-1}{k_n} + (n-1)\log(n-1) - k_n \log k_n \\
&\quad + m_n \log\beta + m_n \log m_n + \frac12 \log m_n - 2 m_n \log n \\
&\quad + k_n - (n-1) + O(1),
\end{align*}
where the $O(1)$ term arises from the constants $\log(\alpha\kappa)$, the $O(m_n^{-1})$ relative error from Lemma~\ref{lem:dm-asympt}, and the $O(\cdot)$ remainders from Stirling's formula. All these remainders are bounded uniformly for $z$ in any compact set $K$, because $m_n \to \infty$ as $n\to\infty$ and the relative errors depend continuously on $z$ through $\eta_n$.

We now expand each term. Recall the expansion
$$
\log(1+\varepsilon) = \varepsilon - \varepsilon^2/2 + \varepsilon^3/3 + O(\varepsilon^4),
$$ valid uniformly for $|\varepsilon| \le 1/2$.

\begin{itemize}
\item $\displaystyle \frac12 \log\frac{n-1}{k_n}$:
\begin{align*}
\log(n-1) &= \log n - \frac1n - \frac{1}{2n^2} + O\Bigl(\frac1{n^3}\Bigr), \\
\log k_n &= \log n + \log x_\infty + \frac{\eta_n}{n x_\infty}
- \frac{\eta_n^2}{2 n^2 x_\infty^2} + O\Bigl(\frac{|\eta_n|^3}{n^3}\Bigr).
\end{align*}

Hence
\[
\frac12 \log\frac{n-1}{k_n} = -\frac12 \log x_\infty - \frac{\eta_n}{2n x_\infty}
+ \frac{\eta_n^2}{4 n^2 x_\infty^2} - \frac{1}{2n} + o(1).
\]

\item $\displaystyle (n-1)\log(n-1)$:
\[
(n-1)\log(n-1) = n\log n - \log n - 1 + \frac{1}{2n} + o\Bigl(\frac1n\Bigr).
\]

\item $\displaystyle -k_n \log k_n$:
\begin{align*}
k_n \log k_n &= (n x_\infty + \eta_n)\Bigl(\log n + \log x_\infty
+ \frac{\eta_n}{n x_\infty} - \frac{\eta_n^2}{2 n^2 x_\infty^2}
+ O\Bigl(\frac{|\eta_n|^3}{n^3}\Bigr)\Bigr) \\
&= n x_\infty \log n + n x_\infty \log x_\infty + \eta_n \log n + \eta_n \log x_\infty \\
&\quad + \eta_n + \frac{\eta_n^2}{n x_\infty} - \frac{\eta_n^2}{2n x_\infty} + o(b_n^2).
\end{align*}

Thus
\[
-k_n \log k_n = -n x_\infty \log n - n x_\infty \log x_\infty - \eta_n \log n
- \eta_n \log x_\infty - \eta_n - \frac{\eta_n^2}{2n x_\infty} + o(b_n^2).
\]

\item $\displaystyle m_n \log\beta$:
\[
m_n \log\beta = n v_\infty \log\beta - \eta_n \log\beta.
\]

\item $\displaystyle m_n \log m_n$:
First expand $\log m_n$:
\[
\log m_n = \log n + \log v_\infty + \log\Bigl(1 - \frac{\eta_n}{n v_\infty}\Bigr).
\]
Using $\log(1-\varepsilon) = -\varepsilon - \frac12 \varepsilon^2 + O(\varepsilon^3)$
with $\varepsilon = \frac{\eta_n}{n v_\infty}$,
\[
\log m_n = \log n + \log v_\infty - \frac{\eta_n}{n v_\infty}
- \frac{\eta_n^2}{2 n^2 v_\infty^2} + O\Bigl(\frac{|\eta_n|^3}{n^3}\Bigr).
\]

Then
\begin{align*}
m_n \log m_n &= (n v_\infty - \eta_n)\Bigl(\log n + \log v_\infty
- \frac{\eta_n}{n v_\infty} - \frac{\eta_n^2}{2 n^2 v_\infty^2}\Bigr) + o(b_n^2) \\
&= n v_\infty \log n + n v_\infty \log v_\infty - \eta_n - \frac{\eta_n^2}{2n v_\infty} \\
&\quad - \eta_n \log n - \eta_n \log v_\infty + \frac{\eta_n^2}{n v_\infty} + o(b_n^2) \\
&= n v_\infty \log n + n v_\infty \log v_\infty - \eta_n \log n - \eta_n \log v_\infty \\
&\quad - \eta_n + \frac{\eta_n^2}{2n v_\infty} + o(b_n^2).
\end{align*}

\item $\displaystyle \frac12 \log m_n$:
\[
\frac12 \log m_n = \frac12 \log n + \frac12 \log v_\infty
- \frac{\eta_n}{2n v_\infty} - \frac{\eta_n^2}{4 n^2 v_\infty^2} + o(1).
\]

\item $\displaystyle -2 m_n \log n$:
\[
-2 m_n \log n = -2(n v_\infty - \eta_n) \log n = -2 n v_\infty \log n + 2 \eta_n \log n.
\]

\item $\displaystyle k_n - (n-1)$:
\[
k_n - (n-1) = n x_\infty + \eta_n - n + 1 = -n v_\infty + \eta_n + 1.
\]
\end{itemize}

Now sum all contributions, grouping terms of the same type:

\begin{itemize}

\item \textbf{$n \log n$ terms:}
\begin{align*}
&(n-1)\log(n-1): && + n\log n, \\
&-k_n\log k_n: && - n x_\infty \log n, \\
&m_n\log m_n: && + n v_\infty \log n, \\
&-2 m_n \log n: && -2 n v_\infty \log n.
\end{align*}
The sum is
$$n \log n\,(1 - x_\infty + v_\infty - 2v_\infty) = n \log n\,(1 - x_\infty - v_\infty) = 0.$$

\item \textbf{$\eta_n \log n$ terms:}
\begin{align*}
&-k_n\log k_n: && -\eta_n \log n, \\
&m_n\log m_n: && -\eta_n \log n, \\
&-2 m_n \log n: && +2 \eta_n \log n.
\end{align*}
The sum is $0$.

\item \textbf{$\log n$ terms (without $\eta_n$):}
\begin{align*}
&(n-1)\log(n-1): && -\log n, \\
&\frac12 \log m_n: && +\frac12 \log n.
\end{align*}
The contribution is $-\frac12 \log n$.

\item \textbf{Linear $n$ terms (no $\log$):}
\begin{align*}
&-k_n\log k_n: && -n x_\infty \log x_\infty, \\
&m_n\log\beta: && +n v_\infty \log\beta, \\
&m_n\log m_n: && +n v_\infty \log v_\infty, \\
&k_n-(n-1): && -n v_\infty.
\end{align*}
Sum: $n\bigl(-x_\infty \log x_\infty + v_\infty \log\beta + v_\infty \log v_\infty - v_\infty\bigr)$.
Using $\beta = \frac{1}{e v_\infty x_\infty}$, we get
$$
\log\beta = -1 - \log v_\infty - \log x_\infty,
$$
so the sum becomes
\begin{align*}
n\bigl(&-x_\infty \log x_\infty - v_\infty - v_\infty \log v_\infty - v_\infty \log x_\infty \\
&\quad + v_\infty \log v_\infty - v_\infty\bigr)
= n\bigl(-(x_\infty + v_\infty) \log x_\infty - 2v_\infty\bigr) \\
&= n\bigl(-\log x_\infty - 2v_\infty\bigr).
\end{align*}
Notice that the fixed-point equation $x_\infty e^{2(1-x_\infty)} = 1$ gives $\log x_\infty + 2v_\infty = 0$, so the sum vanishes.

\item \textbf{Linear $\eta_n$ terms:}
From the various expansions:
\begin{align*}
&\frac12\log\frac{n-1}{k_n}: && -\frac{\eta_n}{2n x_\infty} \quad (\text{negligible, } o(1)), \\
&-k_n\log k_n: && -\eta_n \log x_\infty - \eta_n, \\
&m_n\log\beta: && -\eta_n \log\beta, \\
&m_n\log m_n: && -\eta_n \log v_\infty - \eta_n, \\
&k_n-(n-1): && +\eta_n.
\end{align*}
The negligible term is $O(b_n/\sqrt{n}) = o(1)$. The remaining sum is
\[
\eta_n\bigl(-\log x_\infty - 1 - \log\beta - \log v_\infty - 1 + 1\bigr)
= \eta_n\bigl(-\log x_\infty - \log\beta - \log v_\infty - 1\bigr).
\]
Substituting $\log\beta$ gives zero.

\item \textbf{Quadratic $\eta_n^2$ terms:}
\begin{align*}
&\frac12\log\frac{n-1}{k_n}: && \frac{\eta_n^2}{4 n^2 x_\infty^2} \quad (\text{negligible, } O(b_n^2/n) \to 0), \\
&-k_n\log k_n: && -\frac{\eta_n^2}{2n x_\infty}, \\
&m_n\log m_n: && \frac{\eta_n^2}{2n v_\infty}, \\
&\frac12\log m_n: && -\frac{\eta_n^2}{4 n^2 v_\infty^2} \quad (\text{negligible}).
\end{align*}
Thus the dominant quadratic contribution is
\[
\frac{\eta_n^2}{2n}\left(-\frac1{x_\infty} + \frac1{v_\infty}\right)
= \frac{\eta_n^2}{2n} \cdot \frac{x_\infty - v_\infty}{x_\infty v_\infty}
= -\frac{\eta_n^2}{2n} \cdot \frac{1-2x_\infty}{x_\infty(1-x_\infty)}.
\]
Now write $\eta_n^2 = z^2 b_n^2 n + 2 z b_n \sqrt{n} \delta_n + \delta_n^2$. Since $\delta_n = O(1)$ and $b_n/\sqrt{n} \to 0$,
\[
\frac{\eta_n^2}{2n} = \frac{z^2 b_n^2}{2} + \frac{z b_n \delta_n}{\sqrt{n}} + \frac{\delta_n^2}{2n}
= \frac{z^2 b_n^2}{2} + o(b_n^2),
\]
because $\frac{b_n/\sqrt{n}}{b_n^2} = \frac{1}{b_n \sqrt{n}} \to 0$ and $\frac{1/n}{b_n^2} = \frac{1}{n b_n^2} \to 0$.
Hence the quadratic contribution becomes
\[
-\frac{z^2 b_n^2}{2\sigma^2} + o(b_n^2),
\]
where
\[
\sigma^2 = \frac{x_\infty(1-x_\infty)}{1-2x_\infty}.
\]

\end{itemize}

Putting everything together, we can get
\[
\log \mathbb{P}(X_n = k_n) = -\frac12 \log n - \frac{z^2 b_n^2}{2\sigma^2} + o(b_n^2).
\]
This completes the proof of Lemma \ref{lem:point}.

\medskip
\noindent\textit{Uniformity in $z$.}
In the above expansion, all error terms originate from:
\begin{itemize}
    \item The $O(m_n^{-1})$ relative error in the asymptotics of $d_{m_n}$ (Lemma~\ref{lem:dm-asympt});
    \item The $O(\ell^{-1})$ remainder in Stirling's formula;
    \item The logarithmic Taylor remainders. After multiplication by factors of order $n$, these contribute at most $O(|\eta_n|^3/n^2)+O(|\eta_n|^4/n^3)$ in the terms $k_n\log k_n$ and $m_n\log m_n$, plus smaller contributions from the logarithmic prefactors.
\end{itemize}
For $z$ in a compact set $K$, $\eta_n/n = O(b_n/\sqrt{n})$ uniformly in $z$, and the constants in all $O(\cdot)$ bounds can be chosen uniformly for $z \in K$. Moreover
\[
    \frac{|\eta_n|^3/n^2}{b_n^2}=O\left(\frac{b_n}{\sqrt n}\right)=o(1),
    \qquad
    \frac{|\eta_n|^4/n^3}{b_n^2}=O\left(\frac{b_n^2}{n}\right)=o(1).
\]
Consequently, the remainders $o(b_n^2)$ and $o(1)$ are uniform for $z \in K$. This justifies the uniformity claim in Lemma~\ref{lem:point}.

\end{document}